\newtheorem{theor}{Theorem}[section]
\newtheorem{lemma}[theor]{Lemma}
\newtheorem{rem}[theor]{Remark}
\newcommand{\Ind}{\mathrm{Ind}}
\newcommand{\s}{{\sf S}}
\newcommand{\Md}{\!\mod}
\renewcommand{\epsilon}{\varepsilon}
\renewcommand{\phi}{\varphi}
\newcommand{\xymat}{\xymatrix@R=6pt@C=10pt}
\newcommand{\la}{\lambda}
\newcommand{\be}{\beta}
\newcommand{\de}{\delta}
\newcommand{\da}{{\downarrow}}
\newcommand{\ua}{{\uparrow}}
\def\RP{{\mathscr {RP}}}
\begin{document}

\title[Composition factors of 2-parts spin representations]{Composition factors of 2-parts spin representations of symmetric groups}

\author{\sc Lucia Morotti}
\address
{Institut f\"{u}r Algebra, Zahlentheorie und Diskrete Mathematik\\ Leibniz Universit\"{a}t Hannover\\ 30167 Hannover\\ Germany} 
\email{morotti@math.uni-hannover.de}


\begin{abstract}
Given an odd prime $p$, we identify composition factors of the reduction modulo $p$ of spin irreducible representations of the covering groups of symmetric groups indexed by partitions with 2 parts and find some decomposition numbers.
\end{abstract}

\maketitle


\section{Introduction}

It is well known that in characteristic 0 (pairs of) irreducible spin representations of the symmetric groups are labeled by partitions in distinct parts, that is strict partitions.  
Labelings for the spin irreducible modules in odd characteristic 
have been found 
in \cite{BK4,BK2}, based on ideas from \cite{lt} using Hecke algebras $A_{p-1}^{(2)}$ (the labelings of the simple modules in \cite{BK4,BK2} have been proved to be equivalent in \cite{ks}). If $p=0$ let $\RP_p(n)$ denote the set of strict partitions of $n$. If $p\geq 3$ let $\RP_p(n)$ denote the set of $p$-strict $p$-restricted partitions of $n$, that is partitions $\la\vdash n$ for which $\la_r$ is divisible by $p$ if $\la_r=\la_{r+1}$ and which satisfy $\la_r-\la_{r+1}\leq p-\de_{p\mid \la_r}$ for any $r\geq 0$. Further in either of the previous cases, for any partition $\la$ let $h_{p'}(\la)$ be the number of parts of $\la$ which are not divisible by $p$. Then
\begin{align*}
&\{D(\la,0)|\la\in\RP_p(n)\text{ and }n-h_{p'}\text{ is even}\}\\
&\cup\{D(\la,+),D(\la,-)|\la\in\RP_p(n)\text{ and }n-h_{p'}\text{ is odd}\}
\end{align*}
is a complete set of non-isomorphic irreducible spin representations of $\s_n$. For $\la\in\RP_p(n)$ we define the supermodule $D(\la)$ to be either $D(\la,0)$ or $D(\la,+)\oplus D(\la,-)$ (depending on the parity of $n-h_{p'}(\la)$). If $p=0$ we will write $S(\la,\ldots)$ for $D(\la,\ldots)$ and $S(\la)$ for $D(\la)$.

Not much is know about decomposition matrices of spin representations of symmetric groups.  Known results include the basic and second basic spin cases, see \cite{Wales}, some partial results for $p=3$ and $5$, see \cite{abo1,b,bmo} which use ideas from \cite{my}, leading modules appearing in $S(\la)$, see \cite{BK2,BK,BK3} and the weight 1 case, see \cite{Mu}. For blocks of weight 2 it is expected, due to \cite[Conjecture 6.2]{lt}, that results similar to those in \cite{f} hold.

In characteristic 2, which will not be considered here, no spin representation of $\s_n$ exists. Some results about decomposition matrices in this case, in particular about leading terms, can be found in \cite{ben,bo}.

In this paper we will study composition factors of the reduction modulo $p$ of the supermodules $S((\la_1,\la_2))$. For irreducible representations of symmetric groups an exact description of the composition factors of the reduction modulo $p$ of the modules $S^{(\la_1,\la_2)}$ had been obtained by James in \cite{j}. Although we cannot in general exactly compute the multiplicity of the composition factors of $S((\la_1,\la_2))$, we will describe the composition factors of $S((\la_1,\la_2))$ which are not composition factors of $S((\mu_1,\mu_2))$ with $\mu_1+\mu_2=\la_1+\la_2$ and $\mu_1>\la_1$ and compute their multiplicity in $S((\la_1,\la_2))$.

Before stating results about decomposition matrices, we need to define certain particular partitions. Let $n=bp+c$ with $0\leq c<p$. If $n=0$ define $\be_n:=()$. If $n>0$ define $\be_n:=(p^b,c)$ if $c>0$ or $\be_n:=(p^{b-1},p-1,1)$ if $c=0$. Then $\be_n\in\RP_p(n)$. It is known that $\be_n$ is the partition labeling the basic spin modules in characteristic $p$. Further let $\ell=\ell_p:=(p-1)/2$ and define partitions $\mu_k$ for $1\leq k\leq \ell$ as follows:

{\small
\[\begin{array}{|c|c|c|}
\hline
c&k&\mu_k\\
\hline\hline
0&1&(p^{b-2},p-1,p-2,2,1)\\
\hline
0&2\leq k\leq \ell&(p^{b-1},p-k,k)\\
\hline

1&1&(p^{b-1},p-2,2,1)\\
\hline
1&2\leq k\leq \ell&(p^{b-1},p+1-k,k)\\
\hline

2\leq c\leq p-2&1\leq k\leq \lceil c/2\rceil-1&(p^b,\lfloor c/2\rfloor+k,\lceil c/2\rceil-k)\\
\hline
2\leq c\leq p-2&\lceil c/2\rceil&(p^{b-1},p-1,c,1)\\
\hline
2\leq c\leq p-2&\lceil c/2\rceil+1\leq k\leq \ell&(p^{b-1},p+\lceil c/2\rceil-k,\lfloor c/2\rfloor+k)\\
\hline

p-1&1\leq k\leq \ell-1&(p^b,\ell+k,\ell-k)\\
\hline
p-1&\ell&(p^{b-1},p-1,p-2,2)\\
\hline
\end{array}\]
\vspace{1mm}
\centerline{\sc Table I}
}

\noindent
It can be checked that $\mu_k\in\RP_p(n)$ if $n\geq p\geq 5$ and $1+\de_{n=p}\leq k\leq \ell$.

If $n<p$ then $\tilde\s_n$ is semisimple (where $\tilde\s_n$ is a double cover of $\s_n$), so this case does not need to be considered. For $n\geq p$ the following results hold:

\begin{theor}\label{T090919}
Let $n=p\geq 3$. Define $D_0:=D(\be_p)=D((p-1,1))$ and for $1\leq j\leq \ell-1$ define $D_j:=D(\mu_{j+1})=D((p-j-1,j+1))$. Further define $D_{-1},D_\ell:=0$. Then $[S((p-j,j))]=[D_j]+[D_{j-1}]$ for $0\leq j\leq \ell$.
\end{theor}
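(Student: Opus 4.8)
The plan is to run an induction on $j$ that reduces everything to restriction to $\tilde\s_{p-1}$, which is semisimple because $p-1<p$. First I would record the block-theoretic picture. Since $n=p$, each $S((p-j,j))$ with $0\leq j\leq\ell$ lies in the principal spin block, whose $\bar p$-core is empty and whose $\bar p$-weight is $1$: the strict partitions of $p$ that form a single $\bar p$-bar are exactly the $(p-j,j)$, so these $\ell+1$ supermodules are all the ordinary irreducibles of the block, while the partitions in $\RP_p(p)$ lying in it are exactly the $2$-part ones $(p-i,i)$ with $1\leq i\leq\ell$, i.e.\ $D_{i-1}=D((p-i,i))$. In particular every composition factor of any $S((p-j,j))$ is one of $D_0,\dots,D_{\ell-1}$.

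The two inputs I would isolate are: (a) the spin branching rule, which in the (super) Grothendieck group gives $[\Res S((p-j,j))]=[S((p-j-1,j))]+[S((p-j,j-1))]$ for $1\leq j\leq\ell-1$, with the first summand omitted when $j=\ell$ (as $(p-\ell-1,\ell)$ is not strict) and only $[S((p-1))]$ surviving when $j=0$; and (b) the leading-term result of \cite{BK2,BK}, which says that for the regular partition $(p-j,j)$ with $1\leq j\leq\ell$ the self-labelled factor $D_{j-1}=D((p-j,j))$ occurs in $S((p-j,j))$ exactly once, while for $(p)$ the leading factor is $D(\be_p)=D_0$. Because $\tilde\s_{p-1}$ is semisimple, each $S(\mu)$ appearing above is irreducible in characteristic $p$ and $\Res$ is exact, so the composition length of $S((p-j,j))$ is at most that of its restriction, namely $2$ (and $1$ when $j\in\{0,\ell\}$). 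Hence $[S((p-j,j))]=[D_{j-1}]+\epsilon_j[D(\nu_j)]$ with $\epsilon_j\in\{0,1\}$ and $D(\nu_j)\neq D_{j-1}$.

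I would then prove by induction on $j$ the combined statement ``$[S((p-j,j))]=[D_j]+[D_{j-1}]$ and $[\Res D_j]=[S((p-j-1,j))]$''. The base case $j=0$ is immediate: $\Res S((p))=S((p-1))$ is irreducible, so $S((p))=D_0$ and $[\Res D_0]=[S((p-1))]$. For the inductive step, restrict the expression above and subtract the value $[\Res D_{j-1}]=[S((p-j,j-1))]$ supplied by the inductive hypothesis; this leaves $\epsilon_j[\Res D(\nu_j)]=[S((p-j-1,j))]$, forcing $\epsilon_j=1$ when $j\leq\ell-1$ (and $\epsilon_\ell=0$, since then the right-hand side is empty). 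To identify $\nu_j$, note that each $D_i$ is the leading factor of $S((p-i-1,i+1))$, so $[\Res D_i]$ is a subsum of $[S((p-i-2,i+1))]+[S((p-i-1,i))]$; using the inductively known single-term values $[\Res D_i]=[S((p-i-1,i))]$ for $i<j$ and this subsum bound for $i>j$, one checks that $i=j$ is the only index with $[\Res D_i]=[S((p-j-1,j))]$. Thus $D(\nu_j)=D_j$ and $[\Res D_j]=[S((p-j-1,j))]$, closing the induction; the resulting adjacencies say precisely that the block is a line-shaped Brauer tree $S((p))-D_0-S((p-1,1))-\cdots-D_{\ell-1}-S((p-\ell,\ell))$.

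The main obstacle I anticipate is bookkeeping the $2$-powers in the spin branching rule: because the supermodules $S(\la)$ switch between type $\mathtt{M}$ and type $\mathtt{Q}$ when a box is removed, the naive branching multiplicities carry factors of $2$, and one must work in the correctly normalised super Grothendieck group to make the displayed restriction genuinely multiplicity-free, since both the length bound and the subsum comparison depend on the coefficients being $1$. A secondary point is to confirm rigorously that the block contains no further modular irreducibles and no partition with more than two parts, so that the candidate set for $\nu_j$ is exactly $\{D_0,\dots,D_{\ell-1}\}$; this is where the weight-$1$ structure (equivalently Müller's determination \cite{Mu}) can serve as a backbone, with the restriction computation above then pinning down the ordering of the tree.
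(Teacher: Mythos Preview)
Your proposal is correct in outline but takes a markedly different route from the paper. The paper's proof is two lines: it cites M\"uller \cite[Theorem~4.4]{Mu}, which already gives the weight-$1$ Brauer tree and hence the existence of pairwise non-isomorphic simple supermodules $E_0,\dots,E_{\ell-1}$ with $[S((p-j,j))]=[E_j]+[E_{j-1}]$; it then invokes Lemma~\ref{L54} (the regularisation/leading-term lemma) to identify $E_j$ with $D_j$. No branching argument is used at all.

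Your approach instead reconstructs the Brauer tree by hand, via induction on $j$ and restriction to the semisimple algebra $\tilde\s_{p-1}$. This is a legitimate and more self-contained strategy: the count of modular irreducibles in the block follows already from the Brundan--Kleshchev labelling (the $p$-restricted $p$-strict partitions of $p$ with empty $\bar p$-core are exactly the $(p-i,i)$ for $1\le i\le\ell$), so you do not actually need M\"uller as a ``backbone'' for that step. The trade-off is exactly the one you identify: the spin branching rule for the supermodules $S(\la)$ carries powers of $2$ governed by the parities $a_0(\la)$ and $a_0(\la\setminus A)$, and your length bound and the cancellation $[\Res D_{j-1}]=[S((p-j,j-1))]$ only go through once these are tracked consistently (for instance, at $j=1$ the two summands $S((p-2,1))$ and $S((p-1))$ have different types). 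The paper sidesteps this entirely by quoting the known weight-$1$ decomposition matrix; your argument buys independence from \cite{Mu} at the cost of that bookkeeping.
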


\begin{theor}\label{T120919}
Let $n>p=3$ and $m:=\lfloor (n-1)/2\rfloor-1-\de_{n\equiv 3\Md 6}$. For $0\leq j\leq m$ define $D_j:=D(\be_{n-j}+\be_j)$. If $\la=(\la_1,\la_2)\in\RP_0(n)$ then any composition factor of the reduction modulo $3$ of $S(\la)$ is of the form $D_j$ with $0\leq j\leq\min\{\la_2,m\}$. Further if $\la_2\leq m$ then $[S(\la):D_{\la_2}]=2^a$ with $a=1$ if at least one of the following holds:
\begin{itemize}[nosep,leftmargin=11pt,labelwidth=0pt,align=left]
\item $\la_1>\la_2>0$ are both divisible by $3$,

\item $\la_1>\la_2>0$, one of them is divisible by $3$ and $n$ is odd,

\item $\la_2=0$ and $n$ is even and divisible by $3$,
\end{itemize}
or $a=0$ else.
\end{theor}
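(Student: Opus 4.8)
The plan is to fix $n$ and argue by induction on $\la_2$ (equivalently downward on $\la_1=n-\la_2$), computing at each stage the composition factors of $S((\la_1,\la_2))$ \emph{modulo} those already accounted for by the $S((\mu_1,\mu_2))$ with $\mu_1>\la_1$, so that the only genuinely new factor to identify is the ``leading'' one, which I claim is $D_{\la_2}=D(\be_{n-\la_2}+\be_{\la_2})$. The external input across different $n$ will be the characteristic-$0$ spin branching rule for $S((\la_1,\la_2))$ together with the Brundan--Kleshchev modular branching rules governing the $i$-restriction functors $e_i$ on the simple spin modules $D(\mu)$ via the crystal operators on $\RP_3$; these two descriptions, compared through the exact decomposition map, will drive an auxiliary induction on $n$.

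First I would prove the structural statement, that every composition factor of $S((\la_1,\la_2))$ is some $D_j$ with $0\le j\le\min\{\la_2,m\}$. Restricting $S((\la_1,\la_2))$ to $\tilde\s_{n-1}$ produces (up to the usual type and multiplicity corrections) the two smaller two-part modules $S((\la_1-1,\la_2))$ and $S((\la_1,\la_2-1))$, whose composition factors are known by induction on $n$. On the simple side, using the explicit shapes $\be_{3b}=(3^{b-1},2,1)$, $\be_{3b+1}=(3^b,1)$, $\be_{3b+2}=(3^b,2)$ one computes the action of the crystal operators $\tilde e_i$ on $\be_{n-j}+\be_j$ and checks that $e_i$ carries the family $\{D_j\}$ for $\tilde\s_n$ into the corresponding family for $\tilde\s_{n-1}$ in a unitriangular way; this closes the induction and forces every composition factor to lie in $\{D_j\}$. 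The bound $j\le\la_2$ then follows from the fact that the factors of $S((\la_1,\la_2))$ are bounded by $\la$ in the relevant dominance order, with $D_{\la_2}$ the maximal (leading) one by \cite{BK2,BK,BK3}; the bound $j\le m$ is exactly the range in which $\be_{n-j}+\be_j\in\RP_3(n)$, which also explains the correction $-\de_{n\equiv 3\Md 6}$ in the definition of $m$ as the residue class where the extremal member degenerates.

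Next I would compute the multiplicity $[S((\la_1,\la_2)):D_{\la_2}]=2^a$. Since $D_{\la_2}$ occurs in the head of $S((\la_1,\la_2))$ and the leading-term results identify it as the top constituent, the only source of a factor of $2$ in the composition multiplicity is the super-type bookkeeping: $S((\la_1,\la_2))$ is of type $\mathtt{M}$ or $\mathtt{Q}$ according to the parity of $n-2$, or of $n-1$ when $\la_2=0$, while $D_{\la_2}$ has type governed by the parity of $n-h_{p'}(\be_{n-\la_2}+\be_{\la_2})$, and a type-$\mathtt{Q}$ module contributes a factor of $2$ to the supermodule multiplicity of a type-$\mathtt{M}$ constituent (and conversely). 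Computing $h_{p'}(\be_{n-\la_2}+\be_{\la_2})$ in each residue class, and comparing the two parities, I would match the type mismatch to the three listed configurations (both parts divisible by $3$; one part divisible by $3$ with $n$ odd; $\la_2=0$ with $n$ even and divisible by $3$), in each of which $a=1$, and confirm $a=0$ otherwise. The cleanest way to pin the value down, rather than arguing types abstractly, is to evaluate the relevant reduced Brauer character of $S((\la_1,\la_2))$ on the $3$-regular classes and read off the coefficient of $D_{\la_2}$ after subtracting the contributions of the lower $D_j$.

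The hard part, I expect, will be the two ``exactness'' claims. First, that the crystal action really keeps the family $\{D_j\}$ closed under restriction with no spurious simple modules entering: this requires a genuinely uniform control of $\tilde e_i$ on the two-parameter family $\be_{n-j}+\be_j$ across all residues of $n$ modulo $6$, and it is here that the degenerate case $n\equiv 3\Md 6$ must be handled separately. Second, the exact value of $2^a$ is delicate precisely in the boundary configurations, where the super-type can flip: the cases $\la_2=0$ and ``exactly one of $\la_1,\la_2$ divisible by $3$'' sit on the boundary between type $\mathtt{M}$ and type $\mathtt{Q}$, so the multiplicity there is sensitive to the precise parity conventions and to whether the leading factor is self-associate, and this is where the bookkeeping must be done with the greatest care.
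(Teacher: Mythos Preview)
Your branching-based strategy is workable---indeed it is essentially what the paper does for $p\ge 5$ in the proof of Theorem~\ref{T230819}---but for $p=3$ the paper takes a much shorter route that you have overlooked. The key observation is that $\RP_3(n)$ is in bijection with the set of partitions $(\nu_1,\ldots,\nu_h)$ of $n$ satisfying $\nu_r-\nu_{r+1}\ge 3+\de_{3\mid\nu_r}$, via $\nu\mapsto\be_{\nu_1}+\cdots+\be_{\nu_h}$, and that this bijection \emph{reverses} the dominance order. Once you have this, everything falls out of Lemma~\ref{L54} (the Brundan--Kleshchev regularization/leading-term theorem) in one stroke: the dominance bound $\nu\unlhd\la^R$ on composition factors, translated through the bijection, forces every composition factor of $S((\la_1,\la_2))$ to be some $D(\be_{n-j}+\be_j)$ with $j\le\la_2$, and the explicit multiplicity formula $[S(\la):D(\la^R)]=2^{(h(\la)-h_{p'}(\la)+a_0(\la)-a_p(\la^R))/2}$ gives the value of $a$ directly. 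No induction on $n$, no crystal computations, no case analysis of spurious $\mu$'s.

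By contrast, your branching argument has a genuine soft spot. Knowing that $e_i$ sends the family $\{D_j\}$ into the corresponding family one level down is the easy direction; to close the induction you need the converse, namely that no simple $D(\mu)$ \emph{outside} the family can have all of its restrictions land inside it. In the paper's $p\ge 5$ proof this step produces a long explicit list of candidate $\mu$'s to be eliminated by hand (via normal-node and $\bar p$-core arguments), and you would have to carry out an analogous, if shorter, elimination for $p=3$. Your ``unitriangular'' remark does not supply this. Your multiplicity discussion is on the right track (and is really just Lemma~\ref{L54} in disguise), but the proposal to ``evaluate the relevant reduced Brauer character \ldots\ and read off the coefficient'' is both unnecessary and harder than simply plugging into the closed formula. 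In short: your plan could be made to work, but the paper's $p=3$ argument is a one-liner once you notice the order-reversing bijection, and that is the insight you are missing.
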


\begin{theor}\label{T230819}
Let $n>p\geq 5$, $m:=\lfloor (n-1)/2\rfloor-\de_{n\equiv p\Md 2p}$ and $0\leq c\leq p-1$ with $n\equiv c\Md p$. For $1\leq k\leq \ell$ let $\mu_k$ be as in Table I. Define
\[D_j:=\left\{\begin{array}{ll}
D(\be_{n-j}+\be_j),&0\leq j\leq m-\ell,\\
D(\mu_{m+1-j}),&m-\ell<j\leq m\text{ and }n+c\text{ is even},\\
D(\mu_{\ell-m+j}),&m-\ell<j\leq m\text{ and }n+c\text{ is odd}.
\end{array}\right.\]
If $\la=(\la_1,\la_2)\in\RP_0(n)$ then any composition factor of the reduction modulo $p$ of $S(\la)$ is of the form $D_j$ with $0\leq j\leq\min\{\la_2,m\}$. Further if $\la_2\leq m$ then $[S(\la):D_{\la_2}]=2^a$ with $a=1$ if at least one of the following holds:
\begin{itemize}[nosep,leftmargin=11pt,labelwidth=0pt,align=left]
\item $\la_1>\la_2>0$ are both divisible by $p$,

\item $\la_1>\la_2>0$, one of them is divisible by $p$ and $n$ is odd,

\item $\la_2=0$ and $n$ is even and divisible by $p$,

\item $n\equiv 0\Md 2p$ and $\la=(n/2+1,n/2-1)$,
\end{itemize}
or $a=0$ else.
\end{theor}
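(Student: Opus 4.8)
My plan is to argue by induction on $n$, taking as base case the explicit computation of Theorem~\ref{T090919} at $n=p$ (for $n<p$ the group $\tilde\s_n$ is semisimple and there is nothing to prove), and to drive the induction with the modular branching rules for spin representations attached to the crystal of type $A_{p-1}^{(2)}$. For each residue $i\in\{0,1,\dots,\ell\}$ one has a restriction functor $e_i$ on $\tilde\s_n\md$ together with its socle refinement $\tilde e_i$, and the key structural facts I would use are: (1) a module is zero if and only if all $e_i$ applied to it vanish; (2) in characteristic $0$ the spin branching rule computes $e_i S((\la_1,\la_2))$, up to the type multiplicities intrinsic to spin, as a sum of those $S((\la_1-1,\la_2))$, $S((\la_1,\la_2-1))$ for which the removed end carries residue $i$; and (3) the branching of a basic spin module is again basic, $e_i D(\be_m)=D(\be_{m-1})$ for the appropriate $i$, while more generally the behaviour of $D(\be_{n-j}+\be_j)$ and of the $D(\mu_k)$ of Table~I under $e_i$ is controlled by the normal nodes of these diagrams. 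Since the residues of the two removable nodes $(1,\la_1)$ and $(2,\la_2)$ are read off from the folded pattern $1,2,\dots,\ell,\ell,\dots,2,1,0,1,\dots$ along a row, all of these branchings are completely explicit.

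First I would establish the classification part: every composition factor of the reduction of $S((\la_1,\la_2))$ is some $D_j$ with $0\le j\le\min\{\la_2,m\}$. Given a composition factor $D(\mu)$ of $\overline{S((\la_1,\la_2))}$, applying $e_i$ and using (2) shows that $e_i D(\mu)$ feeds into composition factors of $\overline{S((\la_1-1,\la_2))}$ and $\overline{S((\la_1,\la_2-1))}$, which by the inductive hypothesis are among the $D_{j'}$ for $n-1$. Matching the $(n-1)$-labels $\be_{n-1-j'}+\be_{j'}$ and $\mu_\bullet$ back through (3) forces $\mu$ itself to be one of the listed labels for $n$; the bound $j\le\la_2$ reflects that a label $\be_{n-j}+\be_j$ (or the corresponding $\mu_\bullet$) cannot be reached by removing at most $\la_2$ boxes from the short row, while $j\le m$ is exactly the range in which $D_j$ is defined and nonzero.

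The substantive part is the multiplicity $[S((\la_1,\la_2)):D_{\la_2}]=2^a$. In dominance order $D_{\la_2}$ is the largest label occurring, and it is characterised as the unique composition factor absent from every $\overline{S((\mu_1,\mu_2))}$ with $\mu_1>\la_1$; thus $2^a$ is exactly the leading spin decomposition number, $d_{(\la_1,\la_2),\,\be_{n-\la_2}+\be_{\la_2}}$ when $\la_2\le m-\ell$ and $d_{(\la_1,\la_2),\,\mu_\bullet}$ otherwise. I would compute it by pushing $S((\la_1,\la_2))$ down to $n=p$ along a string of crystal operators $\tilde e_i$, reading the multiplicity off the divided-power restrictions $e_i^{(r)}$ and the base value $1$ from Theorem~\ref{T090919}. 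The value of $a$ then depends on an interplay of the divisibility of $\la_1$ and $\la_2$ by $p$ (which controls how the short row's boxes are absorbed by the crystal strings, in the spirit of the $p$-adic carries governing James's classical answer in \cite{j}) and of the parity of $n$, which through $n-h_{p'}$ fixes the self-associate versus associate-pair types of $S((\la_1,\la_2))$ and of $D_{\la_2}$ and hence the spin doubling. Carrying out this case analysis over the divisibility patterns and parities should reproduce precisely the listed conditions for $a=1$.

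The hard part, where I expect a uniform treatment to break down, is exactly this leading-coefficient computation, because the two effects do not decouple: there are partitions with neither part divisible by $p$ for which both $S((\la_1,\la_2))$ and $D_{\la_2}$ are of associate-pair type and yet $a=0$, and others with both parts divisible by $p$ of the same type pattern with $a=1$, so the type information alone is insufficient and must be combined with the divisibility data extracted from the crystal strings. Compounding this, the boundary $j=m-\ell$, where the label of $D_j$ switches from $\be_{n-j}+\be_j$ to the Table~I form and where the $n+c$ parity split originates, has to be crossed consistently. Finally, the genuinely exceptional fourth case $n\equiv 0\Md{2p}$ with $\la=(n/2+1,n/2-1)$ sits at the very top of the range, $\la_2=m$, where the exceptional label $\mu_1=(p^{b-2},p-1,p-2,2,1)$ enters; there $a=1$ even though both modules are self-associate, so the generic scheme undercounts and I would isolate and treat this single configuration by a separate direct computation rather than by the inductive branching argument.
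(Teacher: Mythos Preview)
Your classification argument---induction on $n$ via branching, feeding composition factors of $D(\mu)\da_{\widetilde\s_{n-1}}$ into the inductive list for $n-1$ and matching back---is the paper's approach. The paper executes it by writing down explicitly all shapes $\mu$ obtainable by adding one node to the $(n-1)$-labels, and then excluding each either by exhibiting a normal node $B$ with $\mu\setminus B$ outside the inductive list (Lemma~\ref{ln}) or by observing that the $p$-bar core of $\mu$ has three rows.

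For the multiplicities your plan departs from the paper, and this is where I see a gap. The paper does not compute $[S((n-j,j)):D_j]$ by pushing $S$ down crystal strings. For $j\le m-\ell$ it uses the regularization theorem (Lemma~\ref{L54}) directly, since $(n-j,j)^R=\be_{n-j}+\be_j$. For the full range it builds, for each $j$, an explicit projective supermodule $P_j$ by applying a hand-crafted sequence of $i$-inductions to the trivial module (Lemma~\ref{L220819_2}), the residue sequence being chosen so that $[P_j:S(\mu)]=0$ for $\mu_1>n-j$ while $[P_j:S((n-j,j))]$ is computed exactly; Brauer reciprocity in the form $[S(\nu):D(\mu)]=2^{a_0(\nu)-a_p(\mu)}[P(\mu):S(\nu)]$ then yields the decomposition number. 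Your restriction scheme gives no mechanism to isolate the single coefficient $[S((\la_1,\la_2)):D_{\la_2}]$: both $S((\la_1,\la_2))$ and $D_{\la_2}$ branch into several pieces, the lower $D_r$'s already present in $S((\la_1,\la_2))$ contaminate the count, and nothing in your outline plays the role of the paper's projectives, which sit on the \emph{induction} side and are engineered to see exactly one two-part $S$ at the top. The exceptional cases you flag are indeed handled separately in the paper, but via Lemma~\ref{ln2} (restriction of $D_j$ is a single simple) together with an explicit computation of $[S((n-j,j))\da_{\widetilde\s_{n-1}}]$ and the inductive hypothesis for $n-1$, not by an ad hoc argument independent of the main machinery.
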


In particular by Theorems \ref{T090919}, \ref{T120919} and \ref{T230819} we have that part of the decomposition matrix of the supermodules of $\widetilde\s_n$ is given by
\[\begin{tikzpicture}

\matrix(m)[matrix of nodes,column sep=-2pt,nodes in empty cells]{
$ $&$D_0$&$ $&$\cdots$&$D_{\overline{m}-b}$&other modules\\
\hline
$S((n))$&$a_0$&&&&$0$\\
$ $&&&$ $&&$ $\\
$ $&&&&&\\
$S((n-\overline{m}+b,\overline{m}-b))$&&&&$a_{\overline{m}-b}$&$0$\\
$S((n-\overline{m}+b-1,\overline{m}-b+1))$&$*$&&$\cdots$&$*$&$0$\\
$ $&$ $&&&$ $&$ $\\
$ $&&&&&\\
$S((n-\overline{m},\overline{m}))$&$*$&&$\cdots$&$*$&$0$\\
};

\draw (m-3-1) node {$\vdots$};
\draw (m-3-4) node {$\ddots$};
\draw (m-3-6) node {$\vdots$};
\draw (m-7-1) node {$\vdots$};
\draw (m-7-2) node {$\vdots$};
\draw (m-7-5) node {$\vdots$};
\draw (m-7-6) node {$\vdots$};
\draw ({$(m-1-1)!.9!(m-1-2)$} |- m.north) -- ({$(m-1-1)!.9!(m-1-2)$} |- m.south);
\draw ({$(m-2-4)!.7!(m-2-5)$}|-{$(m-2-4)!.2!(m-3-4)$}) node {\LARGE $0$};
\draw ({$(m-4-2)!.4!(m-4-3)$}|-{$(m-4-2)!.6!(m-5-2)$}) node {\LARGE $*$};


\end{tikzpicture}\]
with $\overline{m}=\lfloor(n-1)/2\rfloor$, $0\leq b\leq 2$ and $a_j\in\{1,2\}$. Theorems \ref{T090919} and \ref{T120919} can be easily obtained from known results and will be proved in Sections \ref{st1} and \ref{st2}. Theorem \ref{T230819} will be proved in Section \ref{st3} after having studied certain projective modules in Section \ref{sp}.

\section{Basic lemmas}

For $\la\in\RP_0(n)$ let $\la^R\in\RP_p(n)$ be the regularization of $\la$ as defined in \cite[Section 2]{BK3}. In particular if $\la=(\la_1,\ldots,\la_h)$ with $\la_r-\la_{r+1}\geq p+\de_{p\mid\la_r}$ for $1\leq r<h$ then $\la^R=\be_{\la_1}+\ldots+\be_{\la_h}$, while if $\la\in\RP_p(n)$ then $\la^R=\la$. The next lemma can be obtained combining \cite[Theorem 10.8]{BK2}, \cite[Theorem 10.4]{BK} and \cite[Theorem 4.4]{BK3}. For $\la\in\RP_p(n)$ let $a_p(\la):=0$ if $n-h_{p'}(\la)$ is even or $a_p(\la):=1$ if $n-h_{p'}(\la)$ is odd.

\begin{lemma}\label{L54}
Let $p\geq 3$ and $\la\in\RP_0(n)$. Then
\[[S(\la):D(\la^R)]=2^{(h(\la)-h_{p'}(\la)+a_0(\la)-a_p(\la^R))/2}\]
and if $\nu\in\RP_p(n)$ and $D(\nu)$ is a composition factor of the reduction modulo $p$ of $S(\la)$ then $\nu\unlhd\la^R$.
\end{lemma}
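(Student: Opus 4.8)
The plan is to obtain both assertions by assembling the three cited theorems, with essentially all of the genuine work concentrated in the bookkeeping of the powers of $2$ that arise from the superalgebra (type M versus type Q) structure of the modules involved.

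First, for the dominance statement I would appeal directly to the spin regularization theorem \cite[Theorem 4.4]{BK3}, which is the spin analogue of James's regularization theorem. It asserts that $D(\la^R)$ occurs as a composition factor of the reduction modulo $p$ of $S(\la)$ and that every other composition factor $D(\nu)$ is strictly smaller, so that in particular every composition factor $D(\nu)$ satisfies $\nu\unlhd\la^R$. Since the regularization $\la^R$ is already recalled in the excerpt (it equals $\be_{\la_1}+\ldots+\be_{\la_h}$ when the parts of $\la$ are sufficiently separated and equals $\la$ itself when $\la$ is already $p$-restricted), no further input is needed here beyond identifying the regularization of the given $\la$.

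Second, for the multiplicity $[S(\la):D(\la^R)]$ the subtle point is that the three theorems are phrased in different categories: \cite[Theorem 4.4]{BK3} naturally computes a multiplicity in the graded/super category, whereas $[S(\la):D(\la^R)]$ in the statement refers to the ordinary $\tilde\s_n$-module. The two differ by a power of $\sqrt2$ that is governed entirely by the types of the source and target: a module of type Q, which splits into a $+$ and a $-$ constituent, contributes one factor of $\sqrt2$ relative to its type-M counterpart. Writing $a_0(\la)$ for the type indicator of $S(\la)$ in characteristic $0$ (so $S(\la)$ is self-associate exactly when $a_0(\la)=0$, i.e. $n-h(\la)$ is even) and $a_p(\la^R)$ for the type indicator of $D(\la^R)$ in characteristic $p$, I would use \cite[Theorem 10.4]{BK} and \cite[Theorem 10.8]{BK2} as the dictionary relating the super-level modules to the ordinary ones and hence to pass between the super decomposition number and the ordinary one. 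Combining the super multiplicity from \cite[Theorem 4.4]{BK3} with these type-conversion factors should turn the raw power of $2$ into the exponent $(h(\la)-h_{p'}(\la)+a_0(\la)-a_p(\la^R))/2$: the term $h(\la)-h_{p'}(\la)$, the number of parts of $\la$ divisible by $p$, records the change in superalgebra structure between the characteristic $0$ and characteristic $p$ labellings, while the correction $a_0(\la)-a_p(\la^R)$ accounts for the types of the source and target modules.

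The main obstacle I anticipate is precisely this last step. Because the three cited results live in different categories with potentially different normalisations of $S(\la)$ and $D(\nu)$, the real content of the proof is checking that the factors of $2$ and $\sqrt2$ introduced by each conversion assemble, with neither over- nor under-counting, into the single exponent above. As a consistency check I would verify that this exponent is always a non-negative integer, that is, that $h(\la)-h_{p'}(\la)$ and $a_0(\la)-a_p(\la^R)$ have the same parity; confirming this agreement of parities is the cleanest way to be sure the type bookkeeping has been carried out correctly.
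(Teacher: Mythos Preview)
Your proposal is correct and follows exactly the approach indicated in the paper, which simply states that the lemma is obtained by combining \cite[Theorem 10.8]{BK2}, \cite[Theorem 10.4]{BK} and \cite[Theorem 4.4]{BK3} without giving further details. Your elaboration of how the three results fit together---using \cite{BK3} for the dominance assertion and the super multiplicity, and \cite{BK,BK2} for the type-conversion dictionary yielding the precise power of $2$---is precisely the intended argument.
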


Normal nodes play an important role when considering branching (see for example \cite[\S9-a]{BK4} for the definition of normal nodes). The next results will be used in the proof of Theorem \ref{T230819}.

\begin{lemma}\cite[Theorem 8.4.5]{ks}\label{ln}
Let $p\geq 3$ and $\la\in\RP_p(n)$. If $A$ is a normal node of $\la$ and $\la\setminus A\in\RP_p(n-1)$ then $D(\la\setminus A)$ is a composition factor of $D(\la)\da_{\widetilde\s_{n-1}}$.
\end{lemma}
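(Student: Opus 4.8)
The plan is to localise at a single residue and then bring in the crystal/signature machinery for the spin modules of $\widetilde{\s}_n$. Let $i\in\{0,1,\ldots,\ell\}$ be the residue of the normal node $A$. Blocks of $\widetilde{\s}_{n-1}$ are separated by their content, so the restriction splits as $D(\la)\da_{\widetilde{\s}_{n-1}}=\bigoplus_i\Res_iD(\la)$, where $\Res_i$ is the $i$-restriction functor. The partition $\la\setminus A$ has content obtained from that of $\la$ by deleting a single node of residue $i$, so $D(\la\setminus A)$ lies in the summand cut out by $\Res_i$; it therefore suffices to prove that $D(\la\setminus A)$ is a composition factor of $\Res_iD(\la)$.

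I would then recall the socle theorem for spin modules. Reading the reduced $i$-signature of $\la$ in the conventions of \cite[\S9-a]{BK4} and listing the normal $i$-nodes from the bottom up as $A_1,\ldots,A_\epsilon$, one has $\soc\Res_iD(\la)=D(\tilde e_i\la)=D(\la\setminus A_1)$, so the good node $A_1$ is immediately accounted for. For a general normal node it is convenient to pass to the dual picture: since $\Res_i$ and $\Ind_i$ are biadjoint, $D(\la\setminus A)$ occurs in the head (resp.\ socle) of $\Res_iD(\la)$ if and only if $D(\la)$ occurs in the socle (resp.\ head) of $\Ind_iD(\la\setminus A)$, and $A$ normal in $\la$ is equivalent to $A$ conormal in $\la\setminus A$.

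The point requiring more than the socle theorem is that for a normal node other than the good one, $D(\la\setminus A)$ need not sit in the socle of $\Res_iD(\la)$, so adjunction with the socle theorem is not by itself enough. To handle this I would use the divided-power functors $\Res_i^{(r)}$, whose heads and socles realise the iterated removals $D(\la\setminus A_1\setminus\cdots\setminus A_r)$, together with the categorical (twisted) $\mathfrak{sl}_2$-action carried by $\Res_i$ and $\Ind_i$. This rank-one action organises the composition factors of $\Res_iD(\la)$ into an $i$-string, and comparing the socle series of $\Res_iD(\la)$ with those of its divided powers forces every single removal $D(\la\setminus A_j)$ at a normal node to appear. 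The hypothesis $\la\setminus A\in\RP_p(n-1)$ is exactly what guarantees that $D(\la\setminus A)$ is a well-defined simple supermodule, i.e.\ that the corresponding crystal vertex is nonzero.

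The main obstacle is the residue-$0$ case, which is genuinely special in the spin setting. Because simple supermodules split into types recorded by $a_p(\cdot)\in\{0,1\}$, the reduced $i$-signature and the attendant multiplicities carry factors of $2$, and a node of residue $0$ corresponds to a short root of $A_{2\ell}^{(2)}$, so the rank-one reduction is a twisted rather than an ordinary $\mathfrak{sl}_2$. Verifying that the candidate composition factor $D(\la\setminus A)$ genuinely survives, rather than being swallowed by a type change or cancelled in the signature, is precisely the delicate bookkeeping that \cite[Theorem 8.4.5]{ks} performs, and I would rely on its signature formalism to handle $i=0$ uniformly with the cases $i\neq0$.
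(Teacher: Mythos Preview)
The paper does not prove this lemma at all: it is stated purely as a citation of \cite[Theorem~8.4.5]{ks} and used as a black box. There is nothing in the paper to compare your proposal against.

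Your proposal goes well beyond the paper by sketching the strategy behind the cited result. The broad outline---localise at the residue $i$ of $A$, use the socle theorem $\soc\Res_iD(\la)\cong D(\tilde e_i\la)$ for the good node, and invoke divided-power functors and the categorical rank-one action for arbitrary normal nodes, with extra care at $i=0$---is in the right spirit. However, note that your sketch is not a self-contained proof: the adjunction argument you give only detects composition factors in the head or socle, and you explicitly defer the remaining work (the $\mathfrak{sl}_2$-string analysis and the $i=0$ bookkeeping) back to \cite[Theorem~8.4.5]{ks}, which is the very result being stated. As a standalone argument this is circular; as an annotation explaining what the cited theorem does, it is reasonable, though one would want a sharper justification of why every normal-node removal (not just the good one) actually appears in the composition series rather than merely being a candidate in the crystal picture.
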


\begin{lemma}\cite[Theorem 9.13]{BK4}\label{ln2}
Let $p\geq 3$ and $\la\in\RP_p(n)$. If $\la$ has a unique normal node $A$ and $A$ has residue $0$ then $D(\la)\da_{\tilde\s_{n-1}}\cong D(\la\setminus A)$.
\end{lemma}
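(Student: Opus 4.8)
The plan is to analyse the restriction via the residue decomposition of the restriction functor for the double cover. Over $\widetilde\s_{n-1}$ one has $\Res=\bigoplus_{i=0}^{\ell}e_i$, where $e_iD(\la)$ is the summand of $D(\la)\da_{\widetilde\s_{n-1}}$ supported on the central character obtained by removing a node of residue $i$; these functors are the type $A_{p-1}^{(2)}$ Chevalley/Kashiwara functors underlying the spin branching rules. The first step is to collapse the sum to a single term: since $e_iD(\la)\neq 0$ precisely when $\la$ has a normal node of residue $i$, and $A$ is by hypothesis the unique normal node of $\la$, with residue $0$, we get $e_iD(\la)=0$ for all $i\neq 0$ and hence $D(\la)\da_{\widetilde\s_{n-1}}=e_0D(\la)$.

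The second step identifies the composition factor $D(\la\setminus A)$ together with the socle and head of $e_0D(\la)$. Removing the unique normal $0$-node is exactly the Kashiwara operator $\tilde e_0$, so $\la\setminus A=\tilde e_0\la\in\RP_p(n-1)$; by Lemma \ref{ln}, $D(\la\setminus A)$ is a composition factor of $e_0D(\la)$, and the spin branching socle theorem gives $\soc(e_0D(\la))\cong D(\tilde e_0\la)=D(\la\setminus A)$, which is simple. Since $D(\la)$ is self-dual and both $\Res$ and $e_0$ commute with the contravariant duality, $e_0D(\la)$ is self-dual and its head is therefore also $\cong D(\la\setminus A)$.

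With $S:=D(\la\setminus A)$ now appearing as both socle and head, the third step reduces irreducibility to a multiplicity count: a module $M$ with simple socle $S$ and simple head $S$ in which $[M:S]=1$ must equal $S$ (if the socle copy of $S$ lay in $\rad M$ the head would supply a second copy, while otherwise $M$ splits off $S$ and simplicity of the socle forces $M=S$). It thus suffices to show $[e_0D(\la):D(\la\setminus A)]=1$, which one expects from $\epsilon_0(\la)=1$ via the branching multiplicity formula $[e_0D(\la):D(\tilde e_0\la)]=\epsilon_0(\la)$.

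The genuine obstacle is this final multiplicity-one statement, and it is where the residue-$0$ hypothesis does real work. Two points must be controlled. First, the socle/head theorem alone does not exclude factors $D(\nu)$ with $\nu\neq\la\setminus A$ and $\nu\unlhd\la\setminus A$ buried in the middle of $e_0D(\la)$; ruling these out requires the full rank-one structure of the operators at node $0$, encoded by the vanishing $e_0^{(2)}D(\la)=0$ that reflects $\epsilon_0(\la)=1$. Second, and this is the spin-specific subtlety, for residues $i\neq 0$ the restriction functor carries an extra Clifford/superalgebra doubling, so that even a unique normal node of residue $i$ would typically produce $D(\mu)^{\oplus 2}$ or $D(\mu,+)\oplus D(\mu,-)$ rather than a single simple module; the assumption that $A$ has residue $0$ is exactly what suppresses this doubling and upgrades the multiplicity-one conclusion to an honest isomorphism of supermodules $e_0D(\la)\cong D(\la\setminus A)$. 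I would therefore expect the heart of the argument to be a reduction, inside the type $A_{p-1}^{(2)}$ categorification, to a rank-one calculation at the special node $0$.
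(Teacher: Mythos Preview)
The paper gives no proof of this lemma; it is simply quoted as \cite[Theorem~9.13]{BK4}. Your outline is essentially the argument behind that reference: decompose the restriction as $\bigoplus_i e_iD(\la)$, kill all summands with $i\neq 0$ because $\la$ has no normal $i$-node, and then combine the simple-socle branching theorem, self-duality, and the multiplicity formula $[e_0D(\la):D(\tilde e_0\la)]=\epsilon_0(\la)=1$ to force $e_0D(\la)\cong D(\la\setminus A)$.

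One clarification: your third step already finishes the argument. A module $M$ with simple socle $S$, simple head $S$, and $[M:S]=1$ is equal to $S$, full stop; no further composition factors $D(\nu)$ can be hiding in the middle. So the first ``obstacle'' you describe in your last paragraph is not an obstacle at all --- the module-theoretic lemma you stated one paragraph earlier disposes of it. Your second point, that the residue-$0$ hypothesis is exactly what prevents the Clifford doubling (which for $i\neq 0$ would replace the answer by $D(\mu)^{\oplus 2}$ or by a type-change $D(\mu,+)\oplus D(\mu,-)$), is the real content, and is where the hypothesis is genuinely used.
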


\section{Projective modules}\label{sp}

In this section we will construct certain projective modules whose structure will play a major role in the proof of Theorem \ref{T230819}. This section uses ideas from \cite{my} and extends results from \cite{abo1,b,bmo} to characteristic $p\geq 5$ for two-parts partitions.

The content of a node $(r,s)$ is given by $\min\{c-1,p-c\}$, where $1\leq c\leq p$ and $r\equiv c\Md{p}$. So nodes on any row have residues
\[0,1,\ldots,\ell-1,\ell,\ell-1,\ldots,1,0,0,1,\ldots,\ell-1,\ell,\ell-1,\ldots,1,0,\ldots.\]
The content of a partition is the multiset of the contents of its nodes. It is known that two partitions have the same content if and only if they have the same $p$-bar core. Further two irreducible modules are contained in the same block if and only if they are labeled by partitions with the same $p$-bar core (unless possibly if they are labeled by a $p$-bar core, in which case the weight is 0). In particular the content of a block is well defined. For $0\leq i\leq \ell$ and a $\widetilde\s_n$-module $M$ contained in the block(s) with content $I$, let $\Ind_i M$ be the block component(s) of $M\ua^{\widetilde\s_{n+1}}$ corresponding to the block(s) with content $I\cup\{i\}$.

\begin{lemma}\label{L220819_2}
Let $p\geq 5$ and $m:=\lfloor (n-1)/2\rfloor-\de_{n\equiv p\Md 2p}$. For $0\leq j\leq m$ there exist projective modules $[P_j:S(\mu)]=0$ if $\mu\in\RP_0(n)$ with $\mu_1>n-j$ and with $[P_j:S((n-j,j))]=2^a$, where $a=1$ if at least one of the following holds:
\begin{itemize}[nosep,leftmargin=11pt,labelwidth=0pt,align=left]
\item $n-j>j>0$ are both divisible by $p$,

\item $n-j>j>0$, one of them is divisible by $p$ and $n$ is even,

\item $j=0$ and $n$ is odd and divisible by $p$,

\item $n\equiv 0\Md 2p$ and $(n-j,j)=(n/2+1,n/2-1)$,
\end{itemize}
or $a=0$ else.
\end{lemma}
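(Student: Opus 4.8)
The plan is to construct the projective modules $P_j$ explicitly via induction from a suitable projective module in a semisimple situation, controlling precisely which spin Specht modules $S(\mu)$ appear and with what multiplicity. Since $\widetilde\s_n$ is semisimple when $n<p$, I would start from an irreducible projective module in a small symmetric group where the decomposition numbers are transparent, then push it up to $\widetilde\s_n$ by applying the block-restricted induction functors $\Ind_i$ in a carefully chosen sequence of residues. The key bookkeeping device is that $[S(\la):D(\la^R)]$ is governed by Lemma~\ref{L54}, and that the regularization of a widely-separated two-row partition $(n-j,j)$ has the form $\be_{n-j}+\be_j$; the content/$p$-bar-core combinatorics recorded before Lemma~\ref{L220819_2} tells us exactly which residues to add at each step.

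\textbf{Main construction.} First I would fix, for each $j$, a sequence of additions of boxes that builds $(n-j,j)$ (or rather a partition regularizing to the relevant label) one node at a time, and track the content multiset so that each $\Ind_i$ lands in the intended block. Because induction of a projective module is projective, and because $\Ind_i$ is a direct summand of the full induction functor, the resulting module $P_j$ is automatically projective. The vanishing condition $[P_j:S(\mu)]=0$ for $\mu_1>n-j$ should follow by controlling the ``top'' of the partition lattice reached by the chosen induction path: adding boxes along a path that never creates a first part exceeding $n-j$ forces, via the content restrictions, that no $S(\mu)$ with larger first part can occur. The multiplicity $[P_j:S((n-j,j))]=2^a$ is the delicate part: it must be computed by tracking how branching multiplicities accumulate along the induction, using Lemmas~\ref{ln} and~\ref{ln2} for the normal-node behavior and Lemma~\ref{L54} for the base multiplicity $2^{(h(\la)-h_{p'}(\la)+a_0(\la)-a_p(\la^R))/2}$.

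\textbf{Pinning down the power of $2$.} The exponent $a\in\{0,1\}$ is exactly where the four case distinctions arise, and matching them is where I expect the real work to lie. The parity data $a_0(\la)$ and $a_p(\la^R)$ from Lemma~\ref{L54} depend on $h_{p'}$ (the number of parts not divisible by $p$), which for $(n-j,j)$ flips according to whether $n-j$ and $j$ are divisible by $p$; this is precisely what distinguishes ``both divisible by $p$'' from ``one of them'' in the listed conditions. The parity of $n$ enters through $a_0$, and the exceptional case $n\equiv 0\Md{2p}$, $(n-j,j)=(n/2+1,n/2-1)$ must correspond to a coincidence in the $p$-bar-core/content data that produces an extra normal node, forcing the multiplicity up to $2$. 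I would verify each of the four bullet conditions by a direct computation of the half-integer exponent in Lemma~\ref{L54} combined with the branching count.

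\textbf{The main obstacle.} The hard part will be the exceptional fourth case $n\equiv 0\Md{2p}$ with $\la=(n/2+1,n/2-1)$: here the two parts are nearly equal, so the partition is \emph{not} in the widely-separated regime where $\la^R=\be_{n-j}+\be_j$ holds, and the regularization must be analyzed by hand. Establishing that the induction path genuinely produces multiplicity $2$ (rather than $1$) in this borderline geometry, while simultaneously maintaining the vanishing $[P_j:S(\mu)]=0$ for $\mu_1>n-j$, is the crux; it likely requires a separate argument showing that an additional normal node of residue $0$ appears exactly in this configuration, invoking Lemma~\ref{ln2} to control the branching precisely.
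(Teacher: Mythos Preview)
Your overall framework is right---the paper does build $P_j$ by applying a sequence of block-restricted inductions $\Ind_{i_1},\Ind_{i_2},\ldots$ to the trivial module $D_0(())$ of $\widetilde\s_0$---but the mechanism you propose for computing the multiplicities is not the one that works, and the key technical step is absent from your plan. Lemma~\ref{L54} gives $[S(\la):D(\la^R)]$; by Brauer reciprocity this controls $[P(\la^R):S(\la)]$ for the \emph{indecomposable} projective $P(\la^R)$, but the module $P_{I_j}$ you construct by repeated induction is in general far from indecomposable, so Lemma~\ref{L54} says nothing directly about $[P_{I_j}:S((n-j,j))]$. Likewise Lemmas~\ref{ln} and~\ref{ln2} are branching statements for simple modules and are not used in the paper's proof of this lemma at all (they appear only later, in Section~\ref{st3}). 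The multiplicities $[P_{I_j}:S(\mu)]$ are instead computed combinatorially: they count, with suitable powers of $2$, the standard shifted tableaux of shape $\mu$ whose residue sequence equals $I_j$.

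What you are missing is the division step. The paper writes down, in eight separate cases depending on the residues of $n-j$ and $j$ modulo $p$, an explicit residue sequence $I_j$ and an integer $k_j$ (a specific power of $2$). It then argues that for \emph{every} $\mu\in\RP_0(n)$ the multiplicity $[P_{I_j}:S(\mu)]$ is divisible by $k_j$: the reason is a count of how many times one switches from $a_0(\nu)=1$ to $a_0(\nu)=0$ while building $\mu$, together with how many adjacent pairs of node additions in $I_j$ can be swapped (consecutive equal nonzero residues, or a residue-$0$ node on a new row paired with a residue-$0$ node elsewhere). This uniform divisibility lets one take a projective $P_j$ with $[P_j]=[P_{I_j}]/k_j$, and only after dividing does one obtain $[P_j:S((n-j,j))]=2^a$. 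The exceptional fourth bullet is not handled by any normal-node argument; it simply falls out of one of the eight residue-sequence cases (specifically $b=d$, $c=p+1$, $e\in\{p-1,p\}$). Your proposed route via regularization and normal nodes would not reach the statement without first proving $P_j$ is the relevant PIM, which is circular.
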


\begin{proof}
For $I=(i_1,i_2,\ldots)$ let $P_I:=\ldots \Ind_{i_2}\Ind_{i_1}D_0(())$. Then $P_I$ is projective. For $0\leq j\leq m$ we will now construct a residue sequence $I_j$ and define a positive integer $k_j$. We will then show that there exist a projective modules $P_j$ with $[P_j]=[P_{I_j}]/k_j$ for which the lemma holds.

{\bf Case 1:} $j=0$. Let $I_0$ be the residues of $(n)$ (taken starting from the node $(1,1)$ until the node $(1,n)$) and $k_0:=2^{\lfloor (n-1)/2\rfloor-a}$.

{\bf Case 2:} $1\leq j\leq m$. Write $(n-j,j):=(bp+c,dp+e)$ with $2\leq c\leq p+1$ and $1\leq e\leq p$. The case $b=d$ and $(c,e)=(\ell+1,\ell)$ is excluded by assumption. Let $I_j$ be obtained as concatenation of the following tuples: $(0,1,0)$, then $d$ times the tuples $(2,3,\ldots,\ell)$, $(1,2,\ldots,\ell-1)$, $(\ell-1,\ell)$, $(\ell-2,\ell-3,\ldots,0)$, $(0)$, $(\ell-1,\ell-2,\ldots,1)$, $(1,0,0)$ and then $I'_j$ where $I'_j$ is the concatenation of the following:

{\bf Case 2.1:} $b>d$, $1\leq e\leq \ell$: $(2,3,\ldots,\ell)$, $(1,2,\ldots,e-1)$ followed by the residues of $(n-j)$, starting from that of the node $(1,dp+(p+3)/2)$. Further let $k_j:=2^{\lfloor (n-2)/2\rfloor+2d+\de_{e=\ell}-a}$.

{\bf Case 2.2:} $b>d$, $\ell+1\leq e\leq p-1$: $(2,3,\ldots,\ell)$, $(1,2,\ldots,\ell-1)$, $(\ell-1,\ell)$, $(\ell-2,\ell-3,\ldots,0)$, $(0)$, $(\ell-1,\ell-2,\ldots,p-e)$ followed by the residues of $(n-j)$, starting from that of the node $(1,(d+1)p+2)$. Further let $k_j:=2^{\lfloor (n-2)/2\rfloor+2d+1+\de_{e=p-1}-a}$.

{\bf Case 2.3:} $b>d$, $e=p$: $(2,3,\ldots,\ell)$, $(1,2,\ldots,\ell-1)$ ,$(\ell-1,\ell)$, $(\ell-2,\ell-3,\ldots,0)$, $(0)$, $(\ell-1,\ell-2,\ldots,1)$, $(1,0)$ followed by the residues of $(n-j)$, starting from that of the node $(1,(d+1)p+3)$. Further let $k_j:=2^{\lfloor (n-2)/2\rfloor+2d+2-a}$.

{\bf Case 2.4:} $b=d$, $1\leq e\leq\ell-1
$, $e<c\leq \ell+1$: $(2,3,\ldots,c-1)$, $(1,2,\ldots,e-1)$. Further let $k_j:=2^{\lfloor (n-2)/2\rfloor+2d}$.

{\bf Case 2.5:} $b=d$, $1\leq e\leq \ell$, $\ell+2\leq c\leq p+1$: $(2,3,\ldots,\ell)$, $(1,2,\ldots,e-1)$ followed by the residues of $(n-j)$, starting from that of the node $(1,dp+(p+3)/2)$. Further let $k_j:=2^{\lfloor (n-2)/2\rfloor+2d+\de_{e=\ell}-a}$.

{\bf Case 2.6:} $b=d$, $\ell+1\leq e\leq p-1$, $e<c\leq p$: $(2,3,\ldots,\ell)$, $(1,2,\ldots,\ell-1)$, $(\ell-1,\ell)$, $(\ell-2,\ell-3,\ldots,p-c)$, $(\ell-1,\ell-2,\ldots,p-e)$. Further let $k_j:=2^{\lfloor (n-2)/2\rfloor+2d+1-a}$.

{\bf Case 2.7:} $b=d$, $\ell+1\leq e\leq p-2
$, $c=p+1$: $(2,3,\ldots,\ell)$, $(1,2,\ldots,\ell-1)$, $(\ell-1,\ell)$, $(\ell-2,\ell-3,\ldots,0)$, $(0)$, $(\ell-1,\ell-2,\ldots,p-e)$. Further let $k_j:=2^{\lfloor (n-2)/2\rfloor+2d+1}$.

{\bf Case 2.8:} $b=d$, $p-1\leq e\leq p$, $c=p+1$: $(2,3,\ldots,\ell)$, $(1,2,\ldots,\ell-1)$, $(\ell-1,\ell)$, $(\ell-2,\ell-3,\ldots,0)$, $(\ell-1,\ell-2,\ldots,p-e)$, $(0)$. Further let $k_j:=2^{\lfloor (n-2)/2\rfloor+2d+1-a}$.

In any of the above cases, if $I_j=(i_1,i_2,\ldots)$ and $i_r=i_{r+1}$ for some $r$ then $i_r\in\{0,1,\ell-1\}$ and, if it exists, $i_{r+2}\not=i_r$. Let $x_j$ to be the number of successive tuples $(1,1)$ or $(\ell-1,\ell-1)$ which appear in $I_j$ and $y_j$ to be the number of single $0$. If $[P_{I_j}]=\sum_{\mu\in\RP_0(n)}c_{j,\mu}[S(\mu)]$  then $c_{j,\mu}$ is divisible by $2^{\lfloor (n-h(\mu))/2\rfloor+x_j+\max\{0,h(\mu)-y_j\}}$, since to obtain $\mu$ from $()$ adding a single node at each step we have to switch $\lfloor (n-h(\mu))/2\rfloor$ times from partitions with $a_0(\nu)=1$ to partitions with $a_0(\nu)=0$ and whenever adding two nodes of either the same residues $\not=0$ (so on different rows and far enough) or both of residue $0$, one on a new row and the other on a different row their order can be exchanged (and there exists at least $x_j+\max\{0,h(\mu)-y_j\}$ such pairs). It can then be checked that $[P_{I_j}:S(\mu)]$ is divisible by $k_j$ for each $\mu\in\RP_0(n)$. Further it can be computed that $[P_{I_j}:S((n-j,j))]=2^ak_j$ and that $[P_{I_j}:S(\mu)]=0$ if $\mu_1>n-j$.

The lemma then follows by taking $P_j$ with $[P_j]=[P_{I_j}]/k_j$.
\end{proof}

\begin{rem}
The sequences of residues $I_j$ given above roughly correspond to adding nodes according to the following sequence (as long as nodes are contained in the partition $(n-j,j)$ and possibly with minor modifications at the end) given by 
\[y_1,y_2,x_{1,1},\ldots,x_{1,8},x_{2,1},\ldots,x_{2,8},\ldots,\]
where the subsetes of nodes $y_i$ and $x_{1,i}$ are as follows
\[\begin{tikzpicture}

\matrix(m)[matrix of nodes,column sep=-2pt,nodes in empty cells]{
$1$&$2$&$3$&$\hspace{-8pt}\cdots\hspace{-3pt}$&$\ell-1$&$\ell$&$\ell+1$&$\ell+2$&$\ell+3$&$\hspace{-3pt}\cdots\hspace{-3pt}$&$p-2$&$p-1$&$p$&$p+1$&$p+2$\\
\hline
$y_1$&$y_1$&$x_{1,1}$&&&&$x_{1,1}$&$x_{1,3}$&$x_{1,5}$&&&&&$x_{1,5}$&$x_{1,7}$\\
$y_2$&$x_{1,2}$&&&$x_{1,2}$&$x_{1,4}$&$x_{1,4}$&$x_{1,6}$&&&$x_{1,6}$&$x_{1,8}$&&$x_{1,8}$\\
};

\draw  ({$(m-2-1)!.25!(m-2-2)$}|-{$(m-1-1)!.9!(m-2-1)$}) -- ({$(m-2-1)!.7!(m-2-2)$}|-{$(m-1-1)!.9!(m-2-1)$});
\draw  ({$(m-2-3)!.55!(m-2-4)$}|-{$(m-1-1)!.9!(m-2-1)$}) -- ({$(m-2-6)!.6!(m-2-7)$}|-{$(m-1-1)!.9!(m-2-1)$});
\draw  ({$(m-2-9)!.5!(m-2-10)$}|-{$(m-1-1)!.9!(m-2-1)$}) -- ({$(m-2-13)!.5!(m-2-14)$}|-{$(m-1-1)!.9!(m-2-1)$});
\draw  ({$(m-3-2)!.45!(m-3-3)$}|-{$(m-2-1)!.9!(m-3-1)$}) -- ({$(m-3-4)!.45!(m-3-5)$}|-{$(m-2-1)!.9!(m-3-1)$});
\draw  ({$(m-3-6)!.35!(m-3-7)$}|-{$(m-2-1)!.9!(m-3-1)$}) -- ({$(m-3-6)!.6!(m-3-7)$}|-{$(m-2-1)!.9!(m-3-1)$});
\draw  ({$(m-3-8)!.35!(m-3-9)$}|-{$(m-2-1)!.9!(m-3-1)$}) -- ({$(m-3-10)!.5!(m-3-11)$}|-{$(m-2-1)!.9!(m-3-1)$});
\draw  ({$(m-3-12)!.45!(m-3-13)$}|-{$(m-2-1)!.9!(m-3-1)$}) -- ({$(m-3-13)!.45!(m-3-14)$}|-{$(m-2-1)!.9!(m-3-1)$});


\end{tikzpicture}\]
and the subsets $x_{c,i}$ are obtained by shifting $x_{1,i}$ to the right by $(c-1)p$ columns.
\end{rem}

\section{Proof of Theorem \ref{T090919}}\label{st1}

From \cite[Theorem 4.4]{Mu} there exist simple supermodules $E_j$ for $0\leq j\leq \ell-1$ which are pairwise non-isomorphic such that, if $E_{-1}=E_\ell=0$, then $[S((p-j,j))]=[E_j]+[E_{j-1}]$ for $0\leq j\leq \ell$. The theorem then follows from Lemma \ref{L54}.

\section{Proof of Theorem \ref{T120919}}\label{st2}

Note that for $p=3$ we have that $\mu\in\RP_p(n)$ if and only if $\mu=\be_{\nu_1}+\ldots+\be_{\nu_h}$ with $(\nu_1,\ldots,\nu_h)$ a partition of $n$ with $\nu_r-\nu_{r+1}\geq3+\de_{3\mid\nu_r}$ for $1\leq r<h$. Further, if $(\pi_1,\ldots,\pi_k)$ is also a partition of $n$ with $\pi_r-\pi_{r+1}\geq 3+\de_{3\mid\pi_r}$ for $1\leq r<k$, then $\be_{\nu_1}+\ldots+\be_{\nu_h}\unlhd \be_{\pi_1}+\ldots+\be_{\pi_k}$ if and only if $(\nu_1,\ldots,\nu_h)\unrhd (\pi_1,\ldots,\pi_k)$.

The theorem then holds by Lemma \ref{L54}. See also \cite[Theorem 4.1]{bmo} for an alternative partial proof.

\section{Proof of Theorem \ref{T230819}}\label{st3}

We will first prove that any composition factor of $S(\la)$ with $\la\in\RP_0(n)$ with at most 2 rows is of the form $D_j$ for some $j$. This will be done by induction on $n$ (using Theorem \ref{T090919} if $n=p+1$). Let $\la\in\RP_0(n)$ with at most 2 rows and $D(\mu)$ be a composition factor of the reduction modulo $p$ of $S(\la)$. Then any composition factor of $D(\mu)\da_{\widetilde\s_{n-1}}$ is a composition factor of some $S(\nu)$ with $\nu\in\RP_0(n-1)$ with at most 2 rows. In particular by Lemma \ref{ln} there exists $\psi\in\RP_p(n-1)$ such that $D(\psi)$ is a composition factor of some $S(\nu)$ with $\nu\in\RP_0(n-1)$ with at most 2 rows such that $\mu$ is obtained from $\psi$ by adding an addable node. By induction we then have that $D(\mu)\cong D_j$ for some $0\leq j\leq m$ or $\mu$ is of one of the following forms:
\begin{itemize}[nosep,leftmargin=11pt,labelwidth=0pt,align=left]
\item $(p^a,b,c,1)$ with $a\geq 0$, $1<c<b<p-1$ and $(b,c)\not=(p-2,2)$,

\item $(p+1,p^a,b,c)$ with $a\geq 0$, $0<c<b<p$ and $(b,c)\not=(p-1,1)$,

\item $(p+1,p^a,p-1,b,1)$ with $a\geq 0$ and $1<b<p-1$,

\item $(p+1,p^a,p-2,2,1)$ with $a\geq 0$,

\item $(p+1,p^a,p-1,p-2,2)$ with $a\geq 0$,

\item $(p+1,p^a,p-1,p-2,2,1)$ with $a\geq 0$,

\item $(2p^a,b,p^c,d,1)$ with $a,c\geq 0$, $p<b<2p$ and $1<d<p-1$,

\item $(2p^a,2p-1,p+1,p^c,d,1)$ with $a,c\geq 0$ and $1<d<p-1$,

\item $(2p^a,b,p^c,p-1,2)$ with $a,c\geq 0$ and $p<b<2p$,

\item $(2p^a,2p-1,p+1,p^c,p-1,2)$ with $a,c\geq 0$,

\item $(2p^a,b,p+1,p^c,d)$ with $a,c\geq 0$, $p+1<b<2p-1$ and $1<d<p-1$,

\item $(2p^a,2p-1,p+2,p^c,d)$ with $a,c\geq 0$ and $1<d<p-1$,

\item $(2p^a,b,p+1,p^c,p-1,1)$ with $a,c\geq 0$ and $p+1<b<2p-1$,

\item $(2p^a,2p-1,p+2,p^c,p-1,1)$ with $a,c\geq 0$,

\item $(2p+1,2p^a,b,p^c,d)$ with $a,c\geq 0$, $p<b<2p$ and $0<d<p$,

\item $(2p+1,2p^a,2p-1,p+1,p^c,d)$ with $a,c\geq 0$ and $0<d<p$,

\item $(2p+1,2p^a,b,p^c,p-1,1)$ with $a,c\geq 0$ and $p<b<2p$,

\item $(2p+1,2p^a,2p-1,p+1,p^c,p-1,1)$ with $a,c\geq 0$.
\end{itemize}
It can be easily checked that in each of the above cases there exists a normal node $B$ with $\mu\setminus B\in\RP_p(n-1)$ and $D(\mu\setminus B)$ not a composition factor of some $S(\nu)$ with $\nu\in\RP_0(n-1)$ with at most 2 rows unless $\mu$ is of one of the following forms:
\begin{itemize}[nosep,leftmargin=11pt,labelwidth=0pt,align=left]
\item $(3,2,1)$ with $p\geq 7$,

\item $(p+1,2,1)$,

\item $(p+2,p+1,1)$,

\item $(2p+1,p+1,1)$.
\end{itemize}
Since the $p$-bar cores of these partitions have 3 rows, the corresponding modules are not a composition factors of some $S(\la)$ with $\la\in\RP_0(n)$ with at most 2 rows.

In view of Lemma \ref{L220819_2} the theorem holds, up to identification of the modules $D_j$ and their multiplicity in $S((n-j,j))$. For $0\leq j<m-\ell$ the theorem then holds by Lemma \ref{L54}, since in this case $(n-j,j)^R=\be_{n-j}+\be_j$. For $m-\ell<j\leq m$ we can then identify the modules $D_j$ comparing normal/removable nodes. We will now check the multiplicity. For $\mu\in\RP_p(n)$ with $a_p(\mu)=0$ let $P(\mu)=P(\mu,0)$ be the projective indecomposable  module with socle $D(\mu,0)$, while if $a_p(\mu)=1$ let $P(\mu)=P(\mu,+)\oplus P(\mu,-)$ be the sum of the projective indecomposable modules with socles $D(\mu,\pm)$. For $\mu\in\RP_p(n)$ and $\nu\in\RP_0(n)$ we have that:
\begin{itemize}[nosep,leftmargin=11pt,labelwidth=0pt,align=left]
\item if $a_p(\mu)=a_0(\nu)=0$ then
\begin{align*}
[S(\nu):D(\mu)]&=[S(\nu,0):D(\mu,0)],\\
[P(\mu):S(\nu)]&=[P(\mu,0):S(\nu,0)],
\end{align*}

\item if $a_p(\mu)=0$, $a_0(\nu)=1$ then
\begin{align*}
[S(\nu):D(\mu)]&=2[S(\nu,\pm):D(\mu,0)],\\
[P(\mu):S(\nu)]&=[P(\mu,0):S(\nu,\pm)],
\end{align*}

\item if $a_p(\mu)=1$, $a_0(\nu)=0$ then
\begin{align*}
[S(\nu):D(\mu)]&=[S(\nu,0):D(\mu,\pm)],\\
[P(\mu):S(\nu)]&=2[P(\mu,\pm):S(\nu,0)],
\end{align*}

\item if $a_p(\mu)=a_0(\nu)=1$ then for $\de=\pm$
\begin{align*}
[S(\nu):D(\mu)]&=[S(\nu,\de):D(\mu,+)]+[S(\nu,\de):D(\mu,-)]\\
&=[S(\nu,+):D(\mu,\de)]+[S(\nu,-):D(\mu,\de)],\\
[P(\mu):S(\nu)]&=[P(\mu,\de):S(\nu,+)]+[P(\mu,\de):S(\nu,-)]\\
&=[P(\mu,+):S(\nu,\de)]+[P(\mu,-):S(\nu,\de)].
\end{align*}
\end{itemize}

In particular $[S(\nu):D(\mu)]=2^{a_0(\nu)-a_p(\mu)}[P(\mu):S(\nu)]$. The multiplicity $[S((n-j,j)):D_j]$ is then as given in the theorem also for $m-(p-1)/2\leq j\leq m$, unless possibly if $n\equiv 0\Md 2p$ and $j=n/2-1$ or $n\equiv p\Md 2p$ and $j=(n-p)/2$. In either of these two cases $D_j=D((p^k,p-1,p-2,2,1))$ and $D_j\da_{\widetilde\s_{n-1}}=D((p^k,p-1,p-2,2))$ by Lemma \ref{ln2}.

If $n\equiv 0\Md 2p$ and $j=n/2-1$ then
\begin{align*}
[S((n-j,j))\da_{\widetilde\s_{n-1}}]&=[S((n/2,n/2-1))]+[S((n/2+1,n/2-2))].
\end{align*}
Further if $[D_r\da_{\widetilde\s_{n-1}}:D((p^k,p-1,p-2,2))]>0$ then
\begin{align*}
&[S((n-r,r))\da_{\widetilde\s_{n-1}}:D((p^k,p-1,p-2,2))]\\
&=c_r[S((n-r-1,n-r)):D((p^k,p-1,p-2,2))]\\
&\hspace{11pt}+c_{r-1}[S((n-r,r-1)):D((p^k,p-1,p-2,2))]\\
&>0
\end{align*}
and so $r=m=j$. Since $[S((n/2,n/2-1)):D((p^k,p-1,p-2,2))]=2$ and $[S((n/2+1,n/2-2)):D((p^k,p-1,p-2,2))]=0$ by induction, it follows that $[S((n-j,j)):D_j]=2$ in this case.

If $n\equiv p\Md 2p$ and $j=(n-p)/2$ then, since $n>p$,
\begin{align*}
[S((n-j,j))\da_{\widetilde\s_{n-1}}]=\,&2[S(((n+p)/2,(n-p)/2-1))]\\
&+2[S(((n+p)/2-1,(n-p)/2))].
\end{align*}
If $r\leq j$ and $[D_r\da_{\widetilde\s_{n-1}}:D((p^k,p-1,p-2,2))]>0$ then
\begin{align*}
&[S((n-r,r))\da_{\widetilde\s_{n-1}}:D((p^k,p-1,p-2,2))]\\
&=c_r[S((n-r-1,n-r)):D((p^k,p-1,p-2,2))]\\
&\hspace{11pt}+c_{r-1}[S((n-r,r-1)):D((p^k,p-1,p-2,2))]\\
&>0
\end{align*}
and so $r=j$. As $[S(((n+p)/2,(n-p)/2-1)):D((p^k,p-1,p-2,2))]=0$ and $[S(((n+p)/2-1,(n-p)/2)):D((p^k,p-1,p-2,2))]=1$, it follows that $[S((n-j,j)):D_{\la_2}]=2$ also in this case.

\section*{Acknowledgements}

The author was supported by the DFG grant MO 3377/1-1.

\end{document}